\newtheorem{lem}{Lemma}
\newtheorem{thm}[lem]{Theorem}
\newtheorem{cor}[lem]{Corollary}
\theoremstyle{definition}
\newtheorem{defn}[lem]{Definition}
\newtheorem{rem}[lem]{Remark} 
\newcommand{\N}{\ensuremath {\mathbb{N}}}
\newcommand{\calU} {\ensuremath {\mathcal{U}}}
\newcommand{\calP} {\ensuremath {\mathcal{P}}}
\newcommand{\calS} {\ensuremath {\mathcal{S}}}
\title{Exponential triples}
\author{Alessandro Sisto\\
\small Mathematical Institute, 24-29 St Giles, Oxford OX1 3LB, United Kingdom\\
\small \texttt{sisto@maths.ox.ac.uk}
}
\date{}
\begin{document}
\maketitle

\begin{abstract}
Using ultrafilter techniques we show that in any partition of $\mathbb{N}$ into 2 cells there is one cell containing infinitely many exponential triples, i.e. triples of the kind $a,b,a^b$ (with $a,b>1$). Also, we will show that any multiplicative $IP^*$ set is an ``exponential $IP$ set'', the analogue of an $IP$ set with respect to exponentiation.
 
\end{abstract}

\section*{Introduction}

A well-known theorem by Hindman states that given any finite partition of $\N$, there exists an infinite sets $X$ and one cell of the partition containing the finite sums of $X$ (and also the finite products of some infinite set $Y$), see \cite{Hi}. Ultrafilters can be used to give a simpler proof than the original one, see \cite{Be}\footnote{This is available on Bergelson's webpage http://www.math.osu.edu/$\sim$vitaly/}, \cite{HS}.
\par
We will be interested in similar results involving exponentiation instead of addition and multiplication, and our methods of proof will involve ultrafilter arguments. The first main result of this paper is the following.

\begin{thm}
Consider a partition of the natural numbers $\N=A\cup B$. Either $A$ or $B$ contains infinitely many triples $a,b,a^b$, with $a,b>1$.
\end{thm}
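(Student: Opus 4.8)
The plan is to exploit two elementary homomorphism properties of exponentiation and feed them into the algebra of $\beta\N$. First I would observe that for a fixed exponent $b$ the map $\phi_b(x)=x^b$ is an endomorphism of the semigroup $(\N,\cdot)$, since $(xy)^b=x^by^b$, and that for a fixed base $a>1$ the map $\psi_a(x)=a^x$ is a homomorphism from $(\N,+)$ to $(\N,\cdot)$, since $a^{x+y}=a^xa^y$. Each extends to a continuous map on $\beta\N$, say $\overline{\phi_b}$ and $\overline{\psi_a}$, and continuous semigroup homomorphisms carry idempotents to idempotents; in particular $\overline{\psi_a}$ sends any additive idempotent to a multiplicative idempotent. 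This is the mechanism that lets me move between the additive world (where I will control the exponent $b$) and the multiplicative world (where I will control the base $a$).

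The next step is a reduction. To produce infinitely many monochromatic triples inside a single cell $C\in\{A,B\}$ it suffices to find a base $a\in C$ with $a>1$ such that $\{b : a^b\in C\}\cap C$ is an additive $IP$ set, i.e.\ contains $FS(\langle y_n\rangle)$ for some sequence with $y_n>1$. Indeed every such $b$ lies in $C$, has $a^b\in C$, and yields the triple $a,b,a^b$ inside $C$; since an $IP$ set is infinite and its elements distinct, the triples $a,y_n,a^{y_n}$ are distinct and infinitely many. Thus the whole problem collapses to an existence statement: locate one good base together with an additive idempotent witnessing the required $IP$ set (and since any nonprincipal idempotent contains $\{2,3,\dots\}$, the condition $y_n>1$ is free).

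To find such a base I would run the two operations in tandem. Fix an additive idempotent $e$; exactly one cell, say $C$, lies in $e$, and for every $a$ exactly one of $\{b:a^b\in C\}$, $\{b:a^b\in\N\setminus C\}$ belongs to $e$, so $T=\{a:\{b:a^b\in C\}\in e\}$ and its complement partition $\N$. If some $a\in C\cap T$ with $a>1$ exists we are done by the reduction, because $\{b:a^b\in C\}\in e$ and $C\in e$ give $\{b:a^b\in C\}\cap C\in e$. The main obstacle is precisely that the good bases $T$ might be forced entirely into the complementary cell, so that $C\cap T=\emptyset$ for a careless choice of $e$. Overcoming this is where the algebra is needed: I would fix the base from a multiplicative idempotent $u$ with $C\in u$ and exploit that $\overline{\psi_a}(e)$ is itself a multiplicative idempotent, so that membership of the powers $a^b$ can be synchronised with membership of the base. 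Concretely I expect to pass to the iterated-limit ultrafilter $w$ defined by $D\in w \iff \{a:\{b:a^b\in D\}\in e\}\in u$, and to arrange, by a compatible choice of $u$ and $e$, that one cell lies in all of $u$, $e$ and $w$ at once; this forces $C\cap T\neq\emptyset$ and finishes the argument. The step I expect to be hardest is exactly this simultaneous alignment of the additive idempotent controlling the exponents, the multiplicative idempotent controlling the base, and the cell that must also receive every power $a^b$ — the one genuine coupling hidden in the triple $a,b,a^b$.
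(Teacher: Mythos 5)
Your reduction is sound as far as it goes: if you can produce a cell $C$, a nonprincipal ultrafilter $e$ with $C\in e$, and a base $a\in C$, $a>1$, with $\{b:a^b\in C\}\in e$, then $C\cap\{b:a^b\in C\}\in e$ is infinite and you get infinitely many triples. The homomorphism facts you cite are also correct ($x\mapsto a^x$ extends to a map $\beta\N\to\beta\N$ carrying additive idempotents to multiplicative idempotents). But the proof stops exactly where the theorem begins. You need one cell $C$ to lie simultaneously in three ultrafilters: $e$ (so the exponents land in $C$), $u$ (so a base can be found in $C$), and the iterated-limit $w$ (so that $T_C=\{a:\{b:a^b\in C\}\in e\}$ meets $C$ via $T_C\in u$). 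With only two cells, pigeonhole among $e,u,w$ gives you agreement of two of the three, never all three, and you offer no mechanism to force the third. There is no algebraic identity available here: exponentiation does not extend to an associative operation on $\beta\N$, $w$ bears no idempotency relation to $u$ or $e$, and ``a compatible choice of $u$ and $e$'' is precisely the coupling you would have to construct. If such an alignment were achievable by pure $\beta\N$ algebra it would presumably work for any finite number of cells, whereas the paper explicitly lists even the three-cell case as open --- a strong hint that this route, as described, cannot be completed.

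The paper couples base and exponent by a quite different, combinatorial device. It takes a single ultrafilter $\calU$ all of whose members contain arbitrarily long \emph{geometric} progressions (van der Waerden applied to $\{2^n\}$, fed into the partition-regularity characterization of ultrafilter membership), assumes $A\in\calU$, and splits into cases on whether $\log_n[A]\cap A\in\calU$ for every $n>1$. If yes, triples in $A$ come directly. If $\log_n[B]\cap A\in\calU$ for some $n$, a geometric progression $a,ah,\dots,ah^k$ inside that set yields $n^a,\dots,n^{ah^k}\in B$ with $n^{ah^i}=(n^a)^{h^i}$, so either some ratio power $h^i$ falls in $B$ (giving a triple in $B$) or, failing that infinitely often, $A$ swallows arbitrarily long runs $h,h^2,\dots,h^k$ and produces its own triples. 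Note how the final dichotomy uses that there are exactly two cells --- the feature your algebraic approach never engages with. To salvage your plan you would need to handle the case where the cell in $e$ and the cell in $w$ disagree, and that is where some substitute for the van der Waerden step appears unavoidable.
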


Next, we will provide results (Theorems \ref{fegen1} and \ref{fegen2}) which allow to find larger structures than the triples as above inside multiplicative $IP^*$ sets (see Definition \ref{ip*}). A corollary of those theorems (Corollary \ref{fecor}) is given below.

\begin{defn}\label{fe:defn}
Consider an infinite set $X\subseteq \N$ and write $X=\{x_i\}_{i\in\N}$, with $x_j<x_{j+1}$ for each $j$. Define inductively
$$FE^I_{n+1}(X)= \{y^{x_{n+1}}|y\in FE^I_n(X)\}\cup FE^I_{n}(X) \cup\{x_{n+1}\},$$
$$FE^{II}_{n+1}(X)=\{(x_{n+1})^y|y\in FE^{II}_n(X)\}\cup FE^{II}_{n}(X) \cup\{x_{n+1}\},$$
with $FE^I_0(X)=FE^{II}_0(X)=\{x_0\}$. Set
$$FE^I(X)=\bigcup_{n\in\N} FE^I_n(X),$$
$$FE^{II}(X)=\bigcup_{n\in\N} FE^{II}_n(X).$$
We will say that $C\subseteq \N$ is an exponential $IP$ set of type I (resp. II) if it contains a set $FE^I(X)$ (resp. $FE^{II}(X)$) for some infinite $X$.
\end{defn}

As usual, $FS(C)$ and $FP(C)$ denote the set of finite sums and finite products of $C\subseteq \N$ (see definition \ref{fsfp}).

\begin{thm}
 Given any multiplicative $IP^*$ set $A$ there exist some infinite $X,Y\subseteq \N$ such that $FS(X), FE^I(X), FP(Y), FE^{II}(Y)\subseteq A$.
\end{thm}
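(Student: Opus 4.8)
The plan is to build $X$ and $Y$ by two parallel recursive selections of Galvin--Glazer/Hindman type, one governed by an additive idempotent of $\beta\N$ and the other by a multiplicative one, with the exponential requirements encoded at each stage as membership in suitable auxiliary ``large'' sets. The guiding observation is that exponentiation decomposes into two semigroup homomorphisms: for fixed $y\ge 2$ the map $b\mapsto y^{b}$ is a homomorphism $(\N,+)\to(\N,\cdot)$, while for fixed $w\ge 2$ the map $a\mapsto a^{w}$ is a homomorphism $(\N,\cdot)\to(\N,\cdot)$. This is exactly what pairs type~I with $FS$ (where the newest element always occurs as an exponent) and type~II with $FP$ (where the newest element always occurs as a base).

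\emph{Key lemma.} First I would record that, since $A$ is multiplicative $IP^*$, for every fixed $y\ge2$ the set $E_{y}=\{b: y^{b}\in A\}$ is additive $IP^*$, and for every fixed $w\ge2$ the set $M_{w}=\{a: a^{w}\in A\}$ is multiplicative $IP^*$. Writing $\phi_{y}(b)=y^{b}$ and $\psi_{w}(a)=a^{w}$, their continuous extensions to $\beta\N$ are homomorphisms, so they send an additive (resp.\ multiplicative) idempotent to a multiplicative idempotent; as $A$ lies in every multiplicative idempotent, $\phi_{y}^{-1}(A)=E_{y}$ lies in every additive idempotent and $\psi_{w}^{-1}(A)=M_{w}$ lies in every multiplicative idempotent. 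I would also use that $IP^*$ sets are precisely those belonging to every idempotent, so they are closed under finite intersection and each multiplicative (resp.\ additive) $IP^*$ set belongs to any chosen multiplicative (resp.\ additive) idempotent.

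\emph{The two constructions.} Fix a multiplicative idempotent $q$; then $A\in q$, and I run the multiplicative Hindman recursion, maintaining a set in $q$ whose elements can be appended while keeping all finite products in $A$. When choosing $y_{n}$, I intersect this tree set with $A$ and with the finitely many sets $M_{w}$, for $w$ in the (finite) set $FE^{II}(\{y_{0},\dots,y_{n-1}\})$; each $M_{w}\in q$, so the intersection is in $q$ and I may pick $y_{n}>y_{n-1}$ there. The constraint $y_{n}\in\bigcap_{w}M_{w}$ forces every new tower $y_{n}^{\,w}\in A$, and induction gives $FP(Y),FE^{II}(Y)\subseteq A$. The construction of $X$ is the mirror image over an additive idempotent $p$ with $A\in p$: here one uses the collapse $(x_{i_{0}}^{\,Q})^{x_{n}}=x_{i_{0}}^{\,Qx_{n}}$, so that every exponential created by appending $x_{n}$ has the form $z^{x_{n}}$ with $z$ ranging over the finite set $FE^{I}(\{x_{0},\dots,x_{n-1}\})$, whence the requirement $z^{x_{n}}\in A$ is exactly $x_{n}\in E_{z}$. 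Since each $E_{z}$ is additive $IP^*$ (hence in $p$), intersecting the additive Hindman tree with the finitely many $E_{z}$ keeps us inside $p$, and I pick $x_{n}$ there, obtaining $FS(X),FE^{I}(X)\subseteq A$ simultaneously.

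\emph{Main obstacle.} The only non-elementary step is producing an additive idempotent $p$ with $A\in p$, i.e.\ showing that a multiplicative $IP^*$ set is additive $IP$. I would deduce this from the known theorem that every additive $IP^*$ set is multiplicatively central, hence a multiplicative $IP$ set (see \cite{HS}): if $A$ were not additive $IP$ then $\N\setminus A$ would be additive $IP^*$, so it would contain some $FP(Y_{0})$, contradicting that the multiplicative $IP^*$ set $A$ meets every $FP$. The rest is bookkeeping: checking that the auxiliary sets $E_{z}$ and $M_{w}$ are genuinely $IP^*$ (so finite intersections with the idempotent trees never leave the relevant idempotent), that all elements stay $>1$ (automatic, since the idempotents are nonprincipal), and that $x_{n},y_{n}$ can always be taken strictly increasing so that $X$ and $Y$ are infinite.
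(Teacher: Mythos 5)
Your proposal is correct and takes essentially the same route as the paper: the same key lemma (that $\log_y[A]$ is additive $IP^*$ and $A^{1/w}$ is multiplicative $IP^*$, i.e.\ Lemma \ref{prelim}) drives a Galvin--Glazer recursion over an additive and a multiplicative idempotent, with the finitely many constraints $E_z$, $M_w$ intersected in at each stage, which is precisely Theorems \ref{fegen1} and \ref{fegen2} specialized to $f((x_0,\dots,x_n))=\max FE^I_n(\{x_0,\dots,x_n\})$. The only difference is cosmetic: you obtain the additive idempotent containing $A$ from the fact that additive $IP^*$ sets are multiplicative $IP$ sets, whereas the paper invokes Hindman's two-cell theorem (\cite[Corollary 5.22]{HS}); both are standard citations.
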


\subsection*{Acknowledgement}
The author thanks Mauro Di Nasso for suggesting the problem and the referee for very helpful recommendations.

\section{Preliminaries}

This section contains all the results about ultrafilters we will need. The reader is referred to \cite{Be} and \cite{HS} for further details.

An ultrafilter $\calU$ on $\N$ is a collection of subsets of $\N$ such that
\begin{enumerate}
 \item $\N\in \calU$,
 \item $A,B\in\calU\Rightarrow A\cap B\in\calU$,
 \item $A\in\calU, B\supseteq A\Rightarrow B\in\calU$,
 \item $\forall A\subseteq \N$ either $A\in\calU$ or $A^c\in\calU$.
\end{enumerate}

The set of all ultrafilters on $\N$ is denoted by $\beta\N$. Notice that it contains a copy of $\N$: given any $n\in\N$ the collection of subsets of $\N$ containing $n$ is an ultrafilter. The sum and product on $\N$ can be extended\footnote{Indeed, there are two ways to do this, and the one used in \cite{Be} is not the same as the one used in \cite{HS}. This will not affect what follows.} to $\beta\N$ to operations that we will still denote by $+$ and $\cdot$ (they are not commutative). We have that $(\beta\N,+)$ and $(\beta\N,\cdot)$ are semigroups.
\par
Given a semigroup $(S,*)$, an idempotent in $(S,*)$ is $s\in S$ such that $s*s=s$. Idempotent ultrafilters are of interest to us because of the result stated below.

\begin{defn}
\label{fsfp}
Let $X$ be a subset of $\N$. Denote
$$FS(X)=\left\{\sum_{i=0}^n x_i|n\in\N, x_i\in X, x_0<\dots<x_n\right\}$$
and
$$FP(X)=\left\{\prod_{i=0}^n x_i|n\in\N, x_i\in X, x_0<\dots<x_n\right\}.$$
We will say that $C\subseteq \N$ is an additive (resp. multiplicative) $IP$ set if it contains a set $FS(X)$ (resp. $FP(X)$) for some infinite $X$.
\end{defn}

\begin{thm}[\cite{HS}, Theorem 5.8, Lemma 5.11]
\label{ultrweneed}
 $(\beta\N,+)$ and $(\beta\N,\cdot)$ contain idempotent ultrafilters. Also, if $\calU$ is idempotent in $(\beta\N,+)$ (resp. $(\beta\N,\cdot)$) then any $U\in\calU$ is an additive (resp. multiplicative) $IP$ set. What is more, given any sequence $\{x_n\}_{n\in\N}$ there exists an ultrafilter idempotent in $(\beta\N,+)$ (resp. $(\beta\N,\cdot)$) such that for each $m\in\N$, $FS(\{x_n\}_{n\geq m})\in\calU$ (resp. $FP(\{x_n\}_{n\geq m})\in\calU$).
\end{thm}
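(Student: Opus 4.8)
The plan is to establish the three assertions in turn, all resting on the Ellis--Numakura lemma together with the elementary combinatorics of finite sums. I will argue additively throughout; the multiplicative case is obtained verbatim by replacing $n+m$ with $nm$ and the shift $-n+A=\{m:n+m\in A\}$ with the quotient $A/n=\{m:nm\in A\}$. First I would recall the topology that makes everything work: for $A\subseteq\N$ put $\overline A=\{p\in\beta\N:A\in p\}$, so that these clopen sets form a basis for which $\beta\N$ is compact Hausdorff, with $\overline{A\cap B}=\overline A\cap\overline B$ and $\overline{A^c}=(\overline A)^c$. With the operation given by $A\in p*q\iff\{n:-n+A\in q\}\in p$, for each fixed $q$ the map $p\mapsto p*q$ is continuous, since the preimage of $\overline A$ is $\overline{\{n:-n+A\in q\}}$. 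Hence $(\beta\N,*)$ is a compact right-topological semigroup. The existence of an idempotent (the first assertion) is then the Ellis--Numakura lemma, which I would prove as follows: the family of nonempty closed subsemigroups is closed under intersections of chains by compactness, so Zorn's lemma furnishes a minimal one $T$; fixing $s\in T$, the set $Ts$ is a closed subsemigroup contained in $T$, hence $Ts=T$ by minimality, so $\{t\in T:ts=s\}$ is a nonempty closed subsemigroup, again equal to $T$, whence $ss=s$.

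For the second assertion I would run the Galvin--Glazer argument. Fix an idempotent $p$ and $U\in p$. Since $p=p*p$, the set $U^\star=\{n\in U:-n+U\in p\}$ lies in $p$; moreover, for $n\in U^\star$ one has $-n+U^\star\in p$, because $-n+U\in p$ while $\{m:-(n+m)+U\in p\}\supseteq(-n+U)^\star\in p$, and the required set is the intersection of these two members of $p$. Using these two facts I would build recursively an increasing sequence $x_0<x_1<\cdots$ and sets $B_n\in p$ with $B_0=U^\star$, $x_n\in B_n^\star$, and $B_{n+1}=B_n^\star\cap(-x_n+B_n^\star)\in p$ (possible since members of a nonprincipal ultrafilter are infinite). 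Then $B_{n+1}\subseteq B_n$ and $x_n+B_{n+1}\subseteq B_n$, and a short downward induction on the number of summands shows that every increasing finite sum of the $x_n$ lands in $B_0\subseteq U$, exhibiting $U$ as an additive $IP$ set.

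For the third assertion, given a sequence $\{x_n\}$ I set $T_m=FS(\{x_n\}_{n\ge m})$ and consider $E=\bigcap_{m}\overline{T_m}$. As the $T_m$ are nonempty and decreasing, the clopen sets $\overline{T_m}$ enjoy the finite intersection property, so $E$ is a nonempty closed, hence compact, subset of $\beta\N$. The crux is to see that $E$ is a subsemigroup: for $p,q\in E$ and any $m$ I must check $T_m\in p*q$, i.e. $\{n:-n+T_m\in q\}\in p$. Here the key observation is that if $n\in T_m$ is a sum of terms whose largest index is $M$, then $T_{M+1}\subseteq -n+T_m$; since $T_{M+1}\in q$ because $q\in E$, this forces $-n+T_m\in q$ for every $n\in T_m$, so $\{n:-n+T_m\in q\}\supseteq T_m\in p$. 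Thus $E$ is a compact right-topological subsemigroup, and applying Ellis--Numakura inside $E$ produces an idempotent $p\in E$, which by construction contains every $T_m=FS(\{x_n\}_{n\ge m})$.

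The step I expect to be most delicate is the Galvin--Glazer construction in the second assertion: verifying that $-n+U^\star\in p$ for $n\in U^\star$ and then maintaining the invariant $x_n+B_{n+1}\subseteq B_n$ so that \emph{all} increasing finite sums stay inside $U$ is precisely where the idempotency $p*p=p$ is genuinely used, and it is the only non-formal ingredient in the whole argument; by contrast the existence statement and the subsemigroup computation in the third part are essentially bookkeeping once the topology and the shift identity $T_{M+1}\subseteq -n+T_m$ are in hand.
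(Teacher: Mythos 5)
Your proposal is correct and is essentially the same argument as the paper's: the paper gives no proof of this theorem, citing \cite[Theorem 5.8 and Lemma 5.11]{HS}, and what you reconstruct is precisely the proofs in that source --- Ellis--Numakura for existence of idempotents, the Galvin--Glazer star argument (your verification that $-n+U^\star\in p$ for $n\in U^\star$ via $(-n+U)^\star$ is the content of \cite[Lemma 4.14]{HS}, which the paper also quotes) for the $IP$ property, and the compact right-topological subsemigroup $E=\bigcap_m \overline{FS(\{x_n\}_{n\ge m})}$, with the correct key inclusion $T_{M+1}\subseteq -n+T_m$, for the final clause. The only caveat worth recording is that choosing $x_0<x_1<\cdots$ requires the idempotent to be nonprincipal, which is automatic additively on the positive integers but excludes the principal multiplicative idempotent at $1$ --- an edge case implicit in the paper's conventions rather than a flaw in your argument.
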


We will also need the following.

\begin{thm}\label{alapult}
 There exists $\calU\in\beta\N$ such that each $U\in\calU$ contains arbitrarily long (non-trivial) geometric progressions.
\end{thm}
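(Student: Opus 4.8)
The plan is to realise $\calU$ as an ultrafilter all of whose members lie in the family
\[
\mathcal{F}=\{A\subseteq\N : A \text{ contains arbitrarily long non-trivial geometric progressions}\}.
\]
This family is upward closed and contains $\N$ (e.g. $2,4,\dots,2^k$ is a non-trivial geometric progression for every $k$). The existence of an ultrafilter $\calU$ with $\calU\subseteq\mathcal{F}$ is precisely the statement that $\mathcal{F}$ is partition regular, via the standard correspondence between partition regular families and ultrafilters (see \cite{HS}). So the real content is a ``geometric van der Waerden'' fact, which I would deduce from the classical van der Waerden theorem.

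To show $\mathcal{F}$ is partition regular, take $A\in\mathcal{F}$ and any finite partition $A=A_1\cup\dots\cup A_n$. Fix $k$ and let $W$ be large enough that every $n$-colouring of a block of $W$ consecutive integers contains a monochromatic arithmetic progression of length $k$ (van der Waerden's theorem). Since $A\in\mathcal{F}$, choose a non-trivial geometric progression $a,ar,\dots,ar^{W-1}\subseteq A$ (so $r>1$), and colour the index $j\in\{0,\dots,W-1\}$ by the unique $i$ with $ar^{j}\in A_i$. By the choice of $W$ there are $j_0$ and $d\geq 1$ with $ar^{j_0},ar^{j_0+d},\dots,ar^{j_0+(k-1)d}$ all in one cell $A_i$; these $k$ numbers form a geometric progression of ratio $r^{d}>1$, hence non-trivial. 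Thus for every $k$ some cell contains a geometric progression of length $k$, and as there are finitely many cells, one cell $A_i$ does so for arbitrarily large $k$, i.e. $A_i\in\mathcal{F}$.

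To extract the ultrafilter I would argue by compactness of $\beta\N$. For $C\subseteq\N$ write $\widehat{C}=\{p\in\beta\N : C\in p\}$, a clopen set, so that $\{p : p\subseteq\mathcal{F}\}=\bigcap_{C\notin\mathcal{F}}\widehat{C^{c}}$ is closed. Given $C_1,\dots,C_m\notin\mathcal{F}$ one has $\bigcap_i\widehat{C_i^{c}}=\widehat{(C_1\cup\dots\cup C_m)^{c}}$, which is non-empty unless $C_1\cup\dots\cup C_m=\N$; but the latter cannot happen, since disjointifying such a cover and applying partition regularity together with upward closure would force some $C_i\in\mathcal{F}$. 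Hence the sets $\widehat{C^{c}}$ have the finite intersection property, the intersection is non-empty, and any $\calU$ in it satisfies $\calU\subseteq\mathcal{F}$, which is the desired conclusion.

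I expect the geometric van der Waerden step to be the main obstacle: the key idea is to colour the \emph{indices} of a sufficiently long progression and read off a monochromatic sub-progression, and one must check that the sub-progression produced is genuinely non-trivial (ratio $r^{d}>1$) and that pigeonholing over all lengths $k$ yields a single cell working for every $k$. By contrast, the passage from partition regularity to the ultrafilter is routine and could instead simply be quoted from \cite{HS}.
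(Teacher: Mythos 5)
Your proof is correct and follows essentially the same route as the paper: deduce a ``geometric van der Waerden'' theorem from the arithmetic one by transferring to exponents, and then pass from partition regularity of the family $\mathcal{F}$ to an ultrafilter contained in it. The paper simply cites \cite[Theorem 5.7]{HS} for the latter step (which you reprove via compactness) and obtains the geometric statement by restricting the partition to $\{2^n:n\in\N\}$ rather than by colouring the indices of a given long progression, but these are only cosmetic differences.
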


\begin{proof}
 The theorem follows from \cite[Theorem 5.7]{HS} together with (a corollary of) van der Waerden's Theorem that given any finite partition of $\N$ there is one cell containing arbitrarily long geometric progressions. (The usual van der Waerden's Theorem gives a cell containing arbitrarily long \emph{arithmetic} progressions, but one can deduce the stated result considering the restriction of the partition to $\{2^n:n\in\N\}$.)
\end{proof}

(The theorem can also be proven considering minimal idempotent ultrafilters.)
\par
We will use the following notation.

\begin{defn}
 If $A\subseteq \N$ and $n\in\N$ denote
\begin{enumerate}
 \item $-n+A=\{m\in\N:m+n\in A\}$,
 \item if $n\geq 1$, $(n^{-1})A=\{m\in\N:m\cdot n\in A\}$,
 \item if $n\geq 2$, $\log_n[A]=\{m\in\N: n^m\in A\}$,
 \item if $n\geq 1$, $A^{1/n}=\{m\in\N: m^n\in A\}$.
\end{enumerate}

\end{defn}

\begin{defn}
 Fix an ultrafilter $\calU$ on $\N$ and let $A\subseteq \N$. Set
$$A^\star_+=\{x\in A:-x+A\in\calU\},$$
$$A^\star_\bullet=\{x\in A:(x^{-1})A\in\calU\}.$$
\end{defn}

\begin{lem}[\cite{HS}, Lemma 4.14]
 If $\calU+\calU=\calU$ (resp. $\calU\cdot\calU=\calU$) and $A\in\calU$, then $A^\star_+\in\calU$ (resp. $A^\star_\bullet\in\calU$).
\end{lem}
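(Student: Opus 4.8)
The plan is to reduce everything to the definition of the semigroup operation on $\beta\N$ together with the finite-intersection property of ultrafilters; essentially no computation is involved. Recall that, with the convention matching the notation introduced above, addition of ultrafilters is characterized by the rule that a set $B\subseteq\N$ lies in $\calU+\calU$ exactly when $\{x\in\N:-x+B\in\calU\}\in\calU$, and likewise $B\in\calU\cdot\calU$ exactly when $\{x\in\N:(x^{-1})B\in\calU\}\in\calU$. (The footnote above warns that two conventions for extending $+$ and $\cdot$ are in circulation, but the orientation is chosen precisely so that these membership criteria align with the definitions of $A^\star_+$ and $A^\star_\bullet$, so the choice is immaterial here.)

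First I would handle the additive statement. Assume $\calU+\calU=\calU$ and $A\in\calU$. Then $A\in\calU+\calU$, so by the criterion just recalled the set $S:=\{x\in\N:-x+A\in\calU\}$ belongs to $\calU$. On the other hand, the very definition of $A^\star_+$ gives $A^\star_+=A\cap S$. Since $A\in\calU$ and $S\in\calU$, axiom (2) of an ultrafilter (closure under intersection) yields $A\cap S\in\calU$, that is, $A^\star_+\in\calU$.

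The multiplicative case proceeds by the same two steps with $+$ replaced by $\cdot$: from $\calU\cdot\calU=\calU$ and $A\in\calU$ one obtains $A\in\calU\cdot\calU$, hence $T:=\{x\in\N:(x^{-1})A\in\calU\}\in\calU$, and then $A^\star_\bullet=A\cap T$ lies in $\calU$ again by closure under intersection.

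The only point requiring care --- and the closest thing to an obstacle --- is invoking the definition of the operation on $\beta\N$ with the correct orientation, so that the set cut out by idempotency is literally $\{x:-x+A\in\calU\}$ (resp. $\{x:(x^{-1})A\in\calU\}$) rather than some variant. Once that bookkeeping is fixed, the result is immediate from idempotency and the intersection axiom, and there is nothing further to estimate or construct.
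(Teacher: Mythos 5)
Your argument is correct and is exactly the standard proof of the cited result: the paper itself gives no proof (it imports the statement from [HS, Lemma~4.14]), and the argument there is precisely your two steps --- idempotency plus the membership criterion for $\calU+\calU$ (resp.\ $\calU\cdot\calU$) puts $\{x:-x+A\in\calU\}$ (resp.\ $\{x:(x^{-1})A\in\calU\}$) in $\calU$, and intersecting with $A$ finishes. Your remark that the two conventions for the operation coincide on this point when the two factors are the same ultrafilter is also right, so the footnoted ambiguity is indeed harmless here.
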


\section{Exponential triples and exponential IP sets}

\subsection{Exponential triples}

\begin{defn}
An exponential triple is an ordered triple of natural numbers $(a,b,c)$ such that $a^b=c$ and $a,b>1$, to avoid trivialities. We will say that $C\subseteq \N$ contains the exponential triple $(a,b,c)$ if $a,b,c\in C$.

\end{defn}

\begin{thm}
\label{triples}
Consider a partition of the natural numbers $\N=A\cup B$. Either $A$ or $B$ contains infinitely many exponential triples.
\end{thm}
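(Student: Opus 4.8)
The plan is to reduce the statement to a purely "one-cell" existence problem and then feed it into the multiplicative idempotent machinery. First I would observe that it suffices to produce one cell $C\in\{A,B\}$ together with a base $a\in C$, $a>1$, for which $\{b\in C:\ a^b\in C\}$ is infinite: each such $b>1$ yields the triple $(a,b,a^b)$ with all three entries in $C$, and infinitely many $b$ give infinitely many triples. Writing $\log_a[C]=\{b:a^b\in C\}$, what I really want is a cell $C$ and a base $a\in C$ with $C\cap\log_a[C]$ infinite.

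The engine for producing such a configuration is a multiplicative idempotent $\calU$ together with the logarithm operation. Suppose I can arrange that, for some base $\beta\ge 2$ and some cell $C$, both $C\in\calU$ and $\log_\beta[C]\in\calU$; then $C\cap\log_\beta[C]\in\calU$, so by Theorem \ref{ultrweneed} this member contains $FP(Z)$ for an infinite $Z=\{z_1<z_2<\cdots\}$, which I may take to lie inside $C\cap\log_\beta[C]$ and to avoid $1$. For $i<j$ I then read off $a=\beta^{z_i}\in C$ (since $z_i\in\log_\beta[C]$), $b=z_j\in Z\subseteq C$, and $a^b=\beta^{z_iz_j}\in C$ (since $z_iz_j\in FP(Z)\subseteq\log_\beta[C]$); as $a,b>1$ this is an exponential triple in $C$, and varying $j$ gives infinitely many. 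Thus everything comes down to realizing the ``aligned'' situation $C,\ \log_\beta[C]\in\calU$.

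To locate such an alignment I would first test the base $\beta=2$. The four sets $P_{XY}=\{n:\ n\in X,\ 2^n\in Y\}$ (for $X,Y\in\{A,B\}$) partition $\N$, so a multiplicative idempotent $\calU$ contains exactly one of them; if it contains a diagonal class $P_{AA}$ or $P_{BB}$ then $C\cap\log_2[C]\in\calU$ for the corresponding cell and I am done by the previous paragraph. The serious case is the off-diagonal one, where $\calU$ sees only $P_{AB}$ or $P_{BA}$: there a natural triple $(2^{z_i},z_j,2^{z_iz_j})$ has its base and result in one cell but its exponent in the other, so no monochromatic triple is produced and a fixed base does not suffice.

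Overcoming the off-diagonal case is the step I expect to be the main obstacle, and it is exactly where Theorem \ref{alapult} should be used: by letting the base vary I aim to force alignment. Concretely, the geometric-progression ultrafilter (equivalently, multiplicative van der Waerden) should give a cell $C$ containing, for every $L$, a progression of \emph{consecutive} powers $\rho^m,\rho^{m+1},\dots,\rho^{m+L-1}$ of a common base $\rho\ge 2$; this is precisely the aligned data $\{m,m+1,\dots,m+L-1\}\subseteq\log_\rho[C]$. Fixing any $b\in C$ with $b\ge 2$ and taking $L>m(b-1)$, I get $\rho^m\in C$, $b\in C$ and $(\rho^m)^b=\rho^{mb}\in C$ because $m\le mb\le m+L-1$, i.e. a genuine triple $(\rho^m,b,\rho^{mb})$; letting $L\to\infty$ (and using that such progressions occur with arbitrarily large parameters) produces infinitely many. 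The delicate point I anticipate is extracting progressions of consecutive powers of a single base, rather than arbitrary geometric progressions, which need not contain any exponential triple at all; this alignment of the exponent structure with the colouring is the crux of the argument.
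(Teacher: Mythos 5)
Your reduction and your treatment of the ``aligned'' case are fine: if $C\in\calU$ and $\log_\beta[C]\in\calU$ for a multiplicative idempotent $\calU$, the $FP(Z)$ argument does produce infinitely many exponential triples in $C$. The difficulty is that the entire content of the theorem sits in your last paragraph, and the step you propose there does not go through. Theorem \ref{alapult} supplies arbitrarily long geometric progressions $a, ah, \dots, ah^k$; such a progression is a run of consecutive powers $\rho^m,\dots,\rho^{m+L-1}$ of a single base only when the first term $a$ is itself a power of the ratio $h$, and nothing in van der Waerden (nor in its multiplicative form obtained by restricting to $\{2^n\}$) gives that: you obtain $2^m\cdot(2^d)^i$ with no divisibility relation between $m$ and $d$. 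Moreover, even granting such runs, your triple $(\rho^m,b,\rho^{mb})$ requires $L>m(b-1)$, and the starting exponent $m$ of a run is not controlled in terms of its length $L$: the runs could all begin at $m\gg L$, in which case no $\rho^{mb}$ lands back inside the run. So the off-diagonal case, which you correctly identify as the crux, remains open in your argument.

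The paper closes exactly this gap by a further dichotomy rather than by strengthening Theorem \ref{alapult}. Taking $\calU$ as in Theorem \ref{alapult} with $A\in\calU$, if some $B_n=\log_n[B]\cap A$ lies in $\calU$, one takes a long geometric progression $a,ah,\dots,ah^k$ inside $B_n$ and asks where the ratio powers $h^i$ live. If some $h^i\in B$, then $(n^a,h^i,n^{ah^i})$ is an exponential triple in $B$, since both exponents $a$ and $ah^i$ lie in $\log_n[B]$; if this happens for infinitely many progressions, $B$ wins. Otherwise all but finitely many such progressions have every $h^i$ in $A$, so $A$ contains arbitrarily long runs $h,h^2,\dots,h^k$ --- crucially starting at exponent $1$ --- and then for any fixed $b\in A$ with $b>1$ the triple $(h,b,h^b)$ lies in $A$ as soon as $k\geq b$. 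That ``starting at exponent $1$'' is what makes the length requirement achievable ($k\geq b$ with $b$ fixed), whereas your version needs the length to beat an uncontrolled starting exponent. To salvage your outline you would need to replace the appeal to monochromatic consecutive powers by this two-sided analysis of where the ratio powers of a geometric progression fall.
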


\begin{proof}
Let $\calU$ be an ultrafilter as in Theorem \ref{alapult}. Up to exchanging $A$ and $B$, we can assume $A\in\calU$.
Set, for each $n\geq 2$, $A_n=\log_n[A]\cap A$ and $B_n=\log_n[B]\cap A$. If $A_n\in\calU$ for each $n>1$, then clearly $A$ contains infinitely many exponential triples (if $a\in A$ and $b\in A\cap \log_a[A]$ then $\{a,b,a^b\}\subseteq A$).
\par
If this is not the case, consider some $n>1$ such that $B_n\in\calU$. Consider a geometric progression $a,ah,\dots, ah^k$ contained in $B_n$ (with $a>0, h>1$). If $h^i\in B$ for some $i\in\{1,\dots,k\}$, we have that the exponential triple $(n^a, h^i,n^{ah^i})$ is contained in $B$. 
If there are infinitely many geometric progressions $a,ah,\dots,ah^k$ ($a>0, h>1$) contained in $B_n$ and such that $h^i\in B$ for some $i\in\{1,\dots,k\}$, by the argument above it is readily seen that $B$ contains infinitely many exponential triples.
\par
Finally, if this does not hold we have that $A$ contains arbitrarily long progressions of the kind $h,h^2,\dots,h^k$. It is clear in this case that $A$ contains infinitely many exponential triples.

\end{proof}

\subsection{Exponential IP sets}

\begin{defn}
 \label{ip*}
 An additive (resp. multiplicative) $IP^*$ set is a set whose complement is not an additive (resp. multiplicative) $IP$ set.
\end{defn}

\begin{lem}
\label{prelim}
 Let $A$ be a multiplicative $IP^*$ set and let $n\in\N$.
\begin{enumerate}
 \item If $n\geq 2$ then $\log_n[A]$ is an additive $IP^*$ set.
 \item If $n\geq 1$, $A^{1/n}$ is a multiplicative $IP^*$ set.
\end{enumerate}

\end{lem}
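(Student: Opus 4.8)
The plan is to prove both parts by contraposition, in each case converting a finite-sum (resp.\ finite-product) structure found in the complement into a finite-product structure that the multiplicative $IP^*$ hypothesis forbids. Two elementary ingredients do all the work. First, the complement identities $(\log_n[A])^c=\log_n[A^c]$ and $(A^{1/n})^c=(A^c)^{1/n}$, which hold simply because $n^m\in A \iff n^m\notin A^c$ and $m^n\in A \iff m^n\notin A^c$. Second, the two relevant maps are homomorphisms: $m\mapsto n^m$ sends $(\N,+)$ to $(\N,\cdot)$ (since $n^{a+b}=n^a\cdot n^b$), and $m\mapsto m^n$ is an endomorphism of $(\N,\cdot)$ (since $(ab)^n=a^nb^n$). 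These turn additive/multiplicative structure into multiplicative structure in exactly the way needed.

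For part 1, I would suppose for contradiction that $\log_n[A]$ is not additive $IP^*$. By definition its complement $\log_n[A^c]$ is then an additive $IP$ set, so there is an infinite $Y=\{y_0<y_1<\cdots\}$ with $FS(Y)\subseteq\log_n[A^c]$. Set $X=\{n^{y_j}:j\in\N\}$, which is infinite and strictly increasing because $n\geq 2$. A typical element of $FP(X)$ is $\prod_i n^{y_{j_i}}=n^{\sum_i y_{j_i}}$ with $y_{j_0}<y_{j_1}<\cdots$, and its exponent lies in $FS(Y)\subseteq\log_n[A^c]$; hence the product lies in $A^c$. Therefore $FP(X)\subseteq A^c$, so $A^c$ is a multiplicative $IP$ set, contradicting that $A$ is multiplicative $IP^*$.

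For part 2 the case $n=1$ is immediate since $A^{1/1}=A$. For general $n\geq 1$, suppose $A^{1/n}$ is not multiplicative $IP^*$, so that $(A^c)^{1/n}$ contains $FP(Y)$ for some infinite $Y$. Set $X=\{y^n:y\in Y\}$, which is infinite because $y\mapsto y^n$ is strictly increasing on positive integers. Every element of $FP(X)$ has the form $\prod_i y_{j_i}^n=\left(\prod_i y_{j_i}\right)^n$ with base $\prod_i y_{j_i}\in FP(Y)\subseteq(A^c)^{1/n}$, so the $n$-th power lies in $A^c$. Thus $FP(X)\subseteq A^c$, yielding the same contradiction.

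These arguments are short and I do not expect a serious obstacle; the only points needing a moment's care are verifying that the transported set $X$ is genuinely infinite and, in part 1, that exponentiation really does carry the additive $FS$-structure of $Y$ onto multiplicative $FP$-structure landing inside $A^c$ (the increasing-index bookkeeping that matches $FS(Y)$ to $FP(X)$). An alternative and equally short route uses the characterization, immediate from Theorem \ref{ultrweneed}, that a set is multiplicative (resp.\ additive) $IP^*$ exactly when it lies in every multiplicative (resp.\ additive) idempotent ultrafilter: since the two maps above are homomorphisms, their continuous extensions to $\beta\N$ send additive (resp.\ multiplicative) idempotents to multiplicative idempotents, and as $A$ belongs to every multiplicative idempotent we get $\log_n[A]=(m\mapsto n^m)^{-1}[A]$ and $A^{1/n}=(m\mapsto m^n)^{-1}[A]$ in every relevant idempotent $\calU$.
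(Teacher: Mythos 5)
Your proof is correct and uses essentially the same idea as the paper: the homomorphisms $m\mapsto n^m$ and $m\mapsto m^n$ carry $FS$/$FP$ structure into $FP$ structure, so a witness in the complement of $\log_n[A]$ (resp.\ $A^{1/n}$) would produce a multiplicative $IP$ set inside $A^c$. The paper phrases this directly (showing $\log_n[A]$ meets every $FS(X)$ because $A$ meets $FP(n^X)$) rather than by contraposition, but the two are the same argument.
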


\begin{proof}
 $1)$ Consider $FS(X)$ for some infinite $X$. We have to show $\log_n[A]\cap FS(X)\neq\emptyset$. As $A$ is a multiplicative $IP^*$ set we have $FP(n^X)\cap A\neq \emptyset$, which clearly implies $\log_n[A]\cap FS(X)\neq\emptyset$.
\par
 $2)$ Consider $FP(X)$ for some infinite $X$. As $FP(X^n)\cap A\neq \emptyset$, we have $A^{1/n}\cap FP(X)\neq\emptyset$.

\end{proof}

The following theorem is inspired by \cite[Theorem 16.20]{HS}, and the proof closely follows the proof of that theorem. (A simpler proof of a simpler fact will be given in Remark \ref{simpler}.) We will denote the collection of finite subsets of $\N$ by $\calP_f(\N)$.

\begin{thm}
\label{fegen1}
 Let $\calS$ be the set of finite sequences in $\N$ (including the empty sequence)
and let $f:\calS \to \N$. Let $\{y_n\}_{n\in\N}\subseteq \N$ be a sequence and let $A$ be a multiplicative $IP^*$ set. Then there exists $\{x_n\}_{n\in\N}$ such that $FS(\{x_n\}_{n\in\N})\subseteq FS(\{y_n\}_{n\in\N})$ and whenever $F\in\calP_f(\N)$, $l=f((x_0,\dots,x_{\min F-1}))$ and $t\in\{2,\dots,l\}$ we have $t^{\sum_{j\in F} x_j}\in A$.
\end{thm}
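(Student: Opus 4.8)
The plan is to carry out an adaptive form of the Galvin--Glazer construction behind Hindman's theorem, in which the target set a finite sum must hit depends on where the sum begins. Using the last part of Theorem~\ref{ultrweneed}, fix an ultrafilter $\calU$ idempotent in $(\beta\N,+)$ with $FS(\{y_n\}_{n\geq m})\in\calU$ for every $m$. For $s\in\calS$ put $A_s=\bigcap_{t=2}^{f(s)}\log_t[A]$, with the empty intersection read as $\N$ when $f(s)<2$. By Lemma~\ref{prelim}(1) each $\log_t[A]$ with $t\geq 2$ is an additive $IP^*$ set; since the complement of an additive $IP^*$ set is not additive $IP$, the second part of Theorem~\ref{ultrweneed} forbids that complement from lying in any additive idempotent ultrafilter, so $\log_t[A]\in\calU$ and hence $A_s\in\calU$ for every $s$. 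Writing $s_m=(x_0,\dots,x_{m-1})$ for the length-$m$ initial segment of the sequence to be built, the requirement to be met is precisely $\sum_{j\in F}x_j\in A_{s_{\min F}}$ for every nonempty finite $F$; note that $A_{s_m}$ is completely determined the moment $x_{m-1}$ has been chosen.

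I will construct $x_n$, indices $k_n$, and sets $C_n\in\calU$ maintaining the invariants: $C_n\subseteq FS(\{y_m\}_{m\geq k_n})$; $C_n\subseteq A_{s_n}$; and $\sum_{j\in G}x_j+C_n\subseteq A_{s_{\min G}}$ for every nonempty $G\subseteq\{0,\dots,n-1\}$. For the base take $k_0=0$ and $C_0=FS(\{y_m\})\cap A_{s_0}$. At stage $n$, \cite[Lemma 4.14]{HS} applied to $C_n\in\calU$ gives $(C_n)^\star_+\in\calU$, so I may pick $x_n\in(C_n)^\star_+$; thus $x_n\in C_n$ and $-x_n+C_n\in\calU$. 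Since $x_n\in FS(\{y_m\}_{m\geq k_n})$, write $x_n=\sum_{m\in H_n}y_m$ and set $k_{n+1}=\max H_n+1$; this forces the supports $H_n$ to be increasing and pairwise disjoint, so that every $\sum_{n\in F}x_n=\sum_{m\in\bigcup_{n\in F}H_n}y_m$ lies in $FS(\{y_n\})$, giving $FS(\{x_n\})\subseteq FS(\{y_n\})$. As $A_{s_{n+1}}$ is now determined, I set
\[
C_{n+1}=C_n\cap(-x_n+C_n)\cap A_{s_{n+1}}\cap FS(\{y_m\}_{m\geq k_{n+1}}),
\]
which is in $\calU$ because each of its four factors is.

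The invariants at $n+1$ follow by inspection: the first and (via the factor $A_{s_{n+1}}$) the second hold by construction. For the third, fix $z\in C_{n+1}$ and nonempty $G\subseteq\{0,\dots,n\}$: if $n\notin G$ then $z\in C_n$ and the stage-$n$ invariant applies directly; if $G=\{n\}$ then $x_n+z\in C_n\subseteq A_{s_n}$; and if $n\in G$ with $G':=G\setminus\{n\}\neq\emptyset$ then $x_n+z\in C_n$, and the stage-$n$ invariant for $G'$ gives $\sum_{j\in G}x_j+z=\sum_{j\in G'}x_j+(x_n+z)\in A_{s_{\min G}}$, since $\min G=\min G'$. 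Finally, for any nonempty finite $G$ with $N=\max G$: either $G=\{N\}$, whence $\sum_{j\in G}x_j=x_N\in C_N\subseteq A_{s_N}$, or $G\setminus\{N\}\subseteq\{0,\dots,N-1\}$ is nonempty and, as $x_N\in C_N$, the stage-$N$ invariant yields $\sum_{j\in G}x_j=\sum_{j\in G\setminus\{N\}}x_j+x_N\in A_{s_{\min G}}$. In either case $\sum_{j\in G}x_j\in A_{s_{\min G}}=\bigcap_{t=2}^{f(s_{\min G})}\log_t[A]$, i.e. $t^{\sum_{j\in G}x_j}\in A$ for every $t\in\{2,\dots,f(s_{\min G})\}$, which is the assertion of the theorem.

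The one feature going beyond the classical argument is that, through the arbitrary $f$, the target $A_{s_{\min F}}$ for a sum depends on the part of the sequence preceding its least index. What makes the recursion close is the extra invariant $C_n\subseteq A_{s_n}$ together with re-injecting the freshly determined $A_{s_{n+1}}$ into $C_{n+1}$: this controls sums that start at the new index $n$ while using nothing about the $\star$-operation beyond $B\in\calU\Rightarrow B^\star_+\in\calU$, so that no shifting property of the $A_s$ is needed. The only other point requiring care is $A_s\in\calU$, i.e. the passage from $A$ being multiplicative $IP^*$ to $\log_t[A]$ being additive $IP^*$ and the closure of $\calU$-membership under finite intersection, both of which are already in hand.
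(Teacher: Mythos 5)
Your proof is correct and follows essentially the same route as the paper's: the same idempotent ultrafilter containing every $FS(\{y_n\}_{n\ge m})$ and, via Lemma \ref{prelim}, every $\log_t[A]$, followed by a Galvin--Glazer induction in which $\bigcap_{t=2}^{f((x_0,\dots,x_n))}\log_t[A]$ is injected into the next member of $\calU$ as soon as $x_n$ is determined. The only difference is bookkeeping: the paper tracks each previously formed sum $a\in E_m$ and intersects with every $-a+B_m^\star$ (using the stronger fact that $-a+B_m^\star\in\calU$ for $a\in B_m^\star$), whereas you carry the set-level invariant $\sum_{j\in G}x_j+C_n\subseteq A_{s_{\min G}}$ and intersect only with the single set $-x_n+C_n$, a slightly more economical but equivalent arrangement.
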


\begin{proof}
In this proof we set $C^\star=C^\star_+$ for each $C\subseteq \N$. Let $\calU$ be an ultrafilter idempotent in $(\beta\N,+)$ such that $FS(\{y_n\}_{n\geq m})\in\calU$ for each $m\in\N$; see Theorem \ref{ultrweneed}. By the previous lemma, for each $t\in\N$ with $t\geq 2$ we have that $\log_t[A]$ is an additive $IP^*$ set, and therefore $\log_t[A]\in \calU$. In particular, we have $B_0\in \calU$, where $$B_0=FS(\{y_n\}_{n\in\N})\cap\bigcap_{t=2}^{f(\emptyset)} \log_t[A].$$
Pick any $x_0\in B_0^\star$ and $H_0\in\calP_f(\N)$ such that $x_0=\sum_{t\in H_0} y_t$.
\par
We will choose inductively $x_i,H_i$ and $B_i$ satisfying the following properties.
\begin{enumerate}
 \item $x_i=\sum_{t\in H_i} y_t$,
 \item if $i\geq 1$ then $\min H_i> \max H_{i-1}$,
 \item $B_i\in\calU$,
 \item for each $\emptyset\neq F \subseteq \{0,\dots,i\}$ and $m=\min F$ we have $\sum_{j\in F} x_j\in B_m^\star$,
 \item if $i\geq 1$, then $B_i\subseteq \bigcap_{t=2}^{f((x_0,\dots,x_{i-1}))} \log_t[A]$.
\end{enumerate}

Those properties are satisfied for $x_0, H_0, B_0$ chosen as above. Let us now perform the inductive step: suppose that we have $x_i,H_i$ and $B_i$ for $i\leq n$ satisfying the required properties.
\par
Set $k=\max H_n +1$. By our choice of $\calU$, we have $FS(\{y_t\}_{t\geq k})\in\calU$. Set, for $m\leq n$,
$$E_m=\left\{\sum_{j\in F} x_j: \emptyset\neq F\subseteq \{0,\dots,n\} \text{\ and\ } m=\min F\right\}.$$
By $(4)$, $E_m\subseteq B_m^\star$ for each $m\leq n$, so that for every $a\in E_m$ we have $-a+B_m^\star \in\calU$ by Lemma \ref{prelim}. We can then set
$$B_{n+1}=FS(\{y_t\}_{t\geq k})\cap\bigcap_{t=2}^{f((x_0,\dots,x_n))} \log_t[A]\cap\bigcap_{m\leq n}\bigcap_{a\in E_m}(-a+B_m^\star),$$
and we have $B_{n+1}\in\calU$. Pick any $x_{n+1}\in B_{n+1}^\star$ and choose $H_{n+1}\in\calP_f(\N)$ with $\min H_{n+1}\geq k$ and $x_{n+1}=\sum_{t\in H_{n+1}} y_t$.
\par
We only need to check $(4)$.
\par
Let $\emptyset\neq F \subseteq \{0,\dots,n+1\}$ and set $m=\min F$. We have to show that $\sum_{j\in F} x_j\in B_m^\star$. If $n+1\notin F$, the conclusion holds by the inductive hypothesis. Also, if $F=\{n+1\}$ then $m=n+1$ and $\sum_{j\in F} x_j=x_{n+1}\in B_{n+1}^\star=B^\star_m$. So, we can assume $n+1\in F$ and $G=F\backslash \{n+1\}\neq\emptyset$. Set $a=\sum_{j\in G} x_j$. As $G$ is non-empty, we have $a\in E_m$. Hence (as $B_{n+1}\subseteq -a+B_m^\star$) $x_{n+1}\in -a+B_m^\star$, that is to say $\sum_{j\in F} x_j=a+x_{n+1}\in B_m^\star$.
\par
We are now ready to complete the proof. Let $F,l,t$ be as in the statement. By $(4)$ and $(5)$ we get $\sum_{j\in F} x_j\in B_{\min F}\subseteq \log_t[A]$, which by definition means $t^{\sum_{j\in F} x_j}\in A$. Also, $(1)$ and $(2)$ guarantee that $FS(\{x_n\})\subseteq FS(\{y_n\})$.

\end{proof}

The following theorem can be proven in the same way, using a suitable ultrafilter idempotent in $(\N,\cdot)$ and $C^\star_\bullet$ instead of $C^\star_+$.

\begin{thm}
\label{fegen2}
 Let $\calS$ be the set of finite sequences in $\N$ and let $f:\calS \to \N$. Let $\{y_n\}_{n\in\N}\subseteq \N$ be a sequence and let $A$ be a multiplicative $IP^*$ set. Then there exists $\{x_n\}_{n\in\N}$ such that $FP(\{x_n\}_{n\in\N})\subseteq FP(\{y_n\}_{n\in\N})$ and whenever $F\in\calP_f(\N)$, $l=f((x_0,\dots,x_{\min F-1}))$ and $t\in\{1,\dots,l\}$ we have $({\prod_{j\in F} x_j})^t\in A$.
\end{thm}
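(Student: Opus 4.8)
The plan is to follow the proof of Theorem~\ref{fegen1} line by line, substituting the multiplicative structure for the additive one throughout. Concretely, I would set $C^\star=C^\star_\bullet$ for every $C\subseteq\N$, take $\calU$ to be an ultrafilter idempotent in $(\beta\N,\cdot)$ with $FP(\{y_n\}_{n\geq m})\in\calU$ for each $m$ (furnished by Theorem~\ref{ultrweneed}), and replace $\log_t[A]$ by $A^{1/t}$ and $-a+C$ by $(a^{-1})C$ at every occurrence. Under this dictionary, ``$t^{\sum_{j\in F}x_j}\in A$'' becomes ``$(\prod_{j\in F}x_j)^t\in A$'', which by definition of $A^{1/t}$ is exactly the assertion $\prod_{j\in F}x_j\in A^{1/t}$.

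Two inputs drive the construction. First, by Lemma~\ref{prelim}(2) the set $A^{1/t}$ is a multiplicative $IP^*$ set for every $t\geq 1$, hence $A^{1/t}\in\calU$; this plays the role of ``$\log_t[A]\in\calU$''. (Note the index now starts at $t=1$ rather than $t=2$, with $A^{1/1}=A$ contributing a harmless factor.) Second, the multiplicative case of the cited Lemma~4.14 of \cite{HS} gives, for $B\in\calU$, both $B^\star_\bullet\in\calU$ and the shift property that $(a^{-1})B^\star_\bullet\in\calU$ whenever $a\in B^\star_\bullet$; this replaces the additive shift $-a+B^\star_+\in\calU$ used in the previous proof.

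With these in hand I would run the same induction, now with multiplicative invariants: $(1')$ $x_i=\prod_{t\in H_i}y_t$; $(2')$ $\min H_i>\max H_{i-1}$; $(3')$ $B_i\in\calU$; $(4')$ for each $\emptyset\neq F\subseteq\{0,\dots,i\}$ with $m=\min F$ one has $\prod_{j\in F}x_j\in B_m^\star$; and $(5')$ $B_i\subseteq\bigcap_{t=1}^{f((x_0,\dots,x_{i-1}))}A^{1/t}$ for $i\geq 1$. Starting from $B_0=FP(\{y_n\})\cap\bigcap_{t=1}^{f(\emptyset)}A^{1/t}$ and $x_0\in B_0^\star$, the inductive step sets $k=\max H_n+1$, defines $E_m=\{\prod_{j\in F}x_j:\emptyset\neq F\subseteq\{0,\dots,n\},\ m=\min F\}\subseteq B_m^\star$, and then
$$B_{n+1}=FP(\{y_t\}_{t\geq k})\cap\bigcap_{t=1}^{f((x_0,\dots,x_n))}A^{1/t}\cap\bigcap_{m\leq n}\bigcap_{a\in E_m}(a^{-1})B_m^\star,$$
which lies in $\calU$ by the shift property. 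Choosing $x_{n+1}\in B_{n+1}^\star$ and $H_{n+1}$ with $\min H_{n+1}\geq k$, the verification of $(4')$ is the same case analysis as before: when $n+1\in F$ and $G=F\setminus\{n+1\}\neq\emptyset$, the element $a=\prod_{j\in G}x_j$ lies in $E_m$ (since $n+1=\max F$, so $\min G=\min F=m$), hence $x_{n+1}\in(a^{-1})B_m^\star$ gives $\prod_{j\in F}x_j=a\cdot x_{n+1}\in B_m^\star$. Finally $(4')$ and $(5')$ yield $\prod_{j\in F}x_j\in B_{\min F}\subseteq A^{1/t}$, i.e. $(\prod_{j\in F}x_j)^t\in A$, while $(1')$ and $(2')$ give $FP(\{x_n\})\subseteq FP(\{y_n\})$.

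I do not expect a genuine obstacle, since the whole argument is a structural transcription; the only points requiring care are bookkeeping ones. Chief among them is confirming that every additive ingredient has a literal multiplicative analogue: the existence of a multiplicative idempotent capturing $FP$-sets, the $IP^*$-ness of $A^{1/t}$, and --- the one place where the excerpt only quotes half of what is needed --- the multiplicative shift property $(a^{-1})B^\star_\bullet\in\calU$, which is the companion clause of \cite[Lemma 4.14]{HS}. The remaining care is purely indexical: tracking that the exponent now ranges over $\{1,\dots,l\}$, and that $n+1$ is the maximum of any $F\subseteq\{0,\dots,n+1\}$ containing it, so that removing it preserves the minimum used to select the relevant $B_m^\star$.
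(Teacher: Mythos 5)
Your proposal is correct and is exactly the route the paper intends: the paper proves Theorem~\ref{fegen2} by declaring it ``can be proven in the same way'' as Theorem~\ref{fegen1} with a multiplicative idempotent and $C^\star_\bullet$ in place of $C^\star_+$, and your transcription (including the substitution of $A^{1/t}$ for $\log_t[A]$, the index starting at $t=1$, and the appeal to the multiplicative shift clause of \cite[Lemma 4.14]{HS}) fills in precisely the details that translation requires. No gaps.
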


We now give an application of the theorems above. One can define similar notions of exponential $IP$ set and obtain the corollary below using the same argument. Recall that we defined exponential $IP$ sets in the Introduction.

\begin{cor}
\label{fecor}
For any multiplicative $IP^*$ set $A$ there exists some infinite $X, Y\subseteq \N$ such that $FS(X),FE^I(X), FP(Y), FE^{II}(Y)\subseteq A$.
\end{cor}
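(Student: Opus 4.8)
The plan is to treat the two pairs of inclusions separately and symmetrically: I would obtain $Y$ with $FP(Y),FE^{II}(Y)\subseteq A$ from Theorem \ref{fegen2}, and $X$ with $FS(X),FE^I(X)\subseteq A$ from Theorem \ref{fegen1}. The whole trick is to choose the auxiliary function $f$ so that a single clause of the conclusion of these theorems already accounts for an entire exponential $IP$ set. The key observation is structural: every element of $FE^{II}(X)$ has the form $(x_m)^v$, where $m$ is the largest index occurring and $v$ is itself an element of $FE^{II}$ built only from $x_0,\dots,x_{m-1}$; dually, every element of $FE^I(X)$ has the form $u^{x_m}$ with $m$ the largest index occurring and $u$ an element of $FE^I$ built from $x_0,\dots,x_{m-1}$. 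Thus each nontrivial exponential element is one exponentiation whose inner ingredient lives on the initial segment preceding its top index, and this is exactly the quantity that $f$ is allowed to see.

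For the pair $FP(Y),FE^{II}(Y)$ I would define $f\colon\calS\to\N$ by letting $f(\sigma)$ be $1$ plus the maximum of all elements of $FE^{II}$ formable from the entries of the finite sequence $\sigma$ (a finite set, so this is well defined). Feeding this $f$, any sequence $\{y_n\}$, and $A$ into Theorem \ref{fegen2} yields $\{x_n\}$; set $Y=\{x_n\}$. Taking $t=1$ in the conclusion gives $\prod_{j\in F}x_j\in A$ for every $F$, that is $FP(Y)\subseteq A$ at no cost. For an element $(x_m)^v$ of $FE^{II}(Y)$ I would apply the conclusion with the singleton $F=\{m\}$ and $t=v$: since $v$ is built from $x_0,\dots,x_{m-1}$ we have $v\le f((x_0,\dots,x_{m-1}))=l$, so $t\in\{1,\dots,l\}$ is admissible and $(\prod_{j\in\{m\}}x_j)^t=(x_m)^v\in A$. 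Hence $FE^{II}(Y)\subseteq A$.

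The pair $FS(X),FE^I(X)$ would be handled in the same way through Theorem \ref{fegen1}, with $f(\sigma)$ now bounding the elements of $FE^I$ formable from $\sigma$, and with each element $u^{x_m}$ of $FE^I(X)$ recovered from $F=\{m\}$ and base $t=u$: here $u$ is built from $x_0,\dots,x_{m-1}$, so $u\le l$, and $u\ge x_0\ge2$ once the starting sequence has entries $\ge2$, whence $t\in\{2,\dots,l\}$ is admissible and $u^{x_m}\in A$. There is, however, one genuine asymmetry. Theorem \ref{fegen2} admits the value $t=1$, the multiplicative identity, which is what delivered $FP(Y)\subseteq A$ for free; Theorem \ref{fegen1} only allows $t\ge2$, and no value of $t$ turns $t^{\sum_{j\in F}x_j}$ into the bare sum $\sum_{j\in F}x_j$. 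So $FS(X)\subseteq A$ is \emph{not} produced automatically. To get it I would instead choose the input sequence $\{y_n\}$ so that $FS(\{y_n\})\subseteq A$ and invoke the conclusion $FS(\{x_n\})\subseteq FS(\{y_n\})$, taking $X=\{x_n\}$.

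The main obstacle is exactly the existence of such a $\{y_n\}$, i.e. knowing that a multiplicative $IP^*$ set is an additive $IP$ set. This is the single point where the purely exponential machinery of Theorems \ref{fegen1} and \ref{fegen2} is not self-sufficient, since those theorems only ever place powers $t^{(\cdot)}$, never bare sums, into $A$: Lemma \ref{prelim} gives $\log_t[A]\in\calU$ for the additive idempotent $\calU$, but nothing in the construction forces $A\in\calU$. I would resolve this by using that a multiplicative $IP^*$ set is in fact an additive $IP^*$ set, hence lies in every additive idempotent and in particular is an additive $IP$ set, so that a suitable $\{y_n\}$ exists; equivalently one can re-run the proof of Theorem \ref{fegen1} while additionally intersecting each $B_i$ with $A$ (legitimate precisely because $A\in\calU$) and propagating this through the star-set bookkeeping, making $FS(X)\subseteq A$ fall out directly. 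The remaining points—that $f$ is a well-defined map on $\calS$, that the singleton decomposition reproduces each nontrivial element of $FE^I(X)$ and $FE^{II}(Y)$, and that the generators $x_m$ themselves lie in $FS(X)$ resp. $FP(Y)$ as singletons—are routine.
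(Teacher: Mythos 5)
Your construction of $X$ and $Y$ from Theorems \ref{fegen1} and \ref{fegen2} is essentially the paper's: the same choice of $f$ as a bound on the finite exponential sets built from an initial segment, the same use of singletons $F=\{m\}$ to recover $u^{x_m}$ and $(x_m)^v$, and the same device of routing $FS(X)\subseteq A$ through an input sequence with $FS(\{y_n\})\subseteq A$. (Your observation that taking $t=1$ in Theorem \ref{fegen2} yields $FP(Y)\subseteq A$ directly is a small, correct shortcut.) The gap is in the one step you yourself flag as the crux: the existence of $\{y_n\}$ with $FS(\{y_n\})\subseteq A$. You justify it by asserting that a multiplicative $IP^*$ set is an additive $IP^*$ set, and this is false. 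Take $A=\{n\in\N: v_2(n)\text{ is even}\}$, where $v_2$ is the $2$-adic valuation. Since $v_2$ is additive under multiplication, any infinite $Y$ either contains an element of $A$ outright or consists of elements of odd valuation, in which case the product of two of them lies in $A$; so $A$ is a multiplicative $IP^*$ set. But every element of $FS(\{2,2^3,2^5,\dots\})$ has valuation equal to the least exponent used, hence odd, so this set of finite sums misses $A$ and $A$ is not an additive $IP^*$ set. For the same reason your fallback --- intersecting each $B_i$ with $A$ ``because $A\in\calU$'' --- does not work: $A$ belongs to every additive idempotent ultrafilter if and only if it is an additive $IP^*$ set, which it need not be.

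What is true, and what the paper uses, is the weaker statement that a multiplicative $IP^*$ set is an additive $IP$ set. This follows from Hindman's theorem (\cite[Corollary 5.22]{HS}): in the partition $\N=A\cup A^c$ some cell is simultaneously an additive and a multiplicative $IP$ set; it cannot be $A^c$, since $A^c$ being a multiplicative $IP$ set would contradict $A$ being multiplicative $IP^*$; hence $A$ contains some $FS(\{y_n\})$. With that substitution your argument goes through and coincides with the paper's proof.
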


\begin{proof}
Let $B=A^c$. The main result of \cite{Hi} (\cite[Corollary 5.22]{HS}) gives that one between $A$ and $B$ is both an additive and a multiplicative $IP$ set. But $B$ is not a multiplicative $IP$ set, hence $A$ is an additive $IP$ set.
 Let $\{y_i\}$ be such that $FS(\{y_i\}_{i\in\N})\subseteq A$, and define $f:\calS\to\N$ as $f((x_0,\dots,x_n))=\max FE^I_n(\{x_0,\dots,x_n\})$. Let $X=\{x_n\}_{n\in\N}$ be as in Theorem \ref{fegen1} (we can assume that each $x_i$ is greater than 1). Clearly, $FS(X)\subseteq A$.  We will show inductively $FE^I_i(X)\subseteq A$. Notice that $X\subseteq A$ (in particular $FE^I_0(X)\subseteq A$). Suppose $FE^I_{n}(X)\subseteq A$ and consider $y^{x_{n+1}}\in FE^I_{n+1}(X)$. As $2\leq y\leq f((x_0,\dots,x_n))$, Theorem \ref{fegen1} gives $y^{x_{n+1}}\in A$.
\par
 The set $Y$ can be found applying Theorem \ref{fegen2} in a similar way.
\end{proof}

\begin{rem}
\label{simpler}
 We now give a simpler proof that any multiplicative $IP^*$ set is an exponential $IP$ set of type I (a similar proof can be given for type II).
\par
 Let $A$ be a multiplicative $IP^*$ set and let $B=A^c$. As shown in the proof of the corollary, $A$ is an additive $IP$ set. Let $\calU$ be an idempotent ultrafilter in $(\beta\N,+)$ such that $A\in\calU$ (see Theorem \ref{ultrweneed}).
\par
For each $n\in\N$, $n>1$, we have that $B_n=\log_n[B]\cap A\notin\calU$, for otherwise $B_n$ would be an additive $IP$ set and $n^{B_n}\subseteq B$ would be a multiplicative $IP$ set. Therefore $A_n=\log_n[A]\cap A\in\calU$, for each $n>1$. We are ready to construct a set $X$ such that $FE^I(X)\subseteq A$. Just set $X=\{x_i\}_{i\in\N}$, for any sequence $\{x_i\}$ which satisfies:
$$\left\{
\begin{array}{l}
x_0\in A, x_0>1,\\
N_i=\max FE^I_i(\{x_j\}_{j\leq i}),\\
x_{i+1}\in \bigcap_{j=2}^{N_i} A_j\cap A.\\
\end{array}
\right. $$
\end{rem}

\subsection{Open questions}
There are several natural questions which arise at this point. For example, is there an elementary proof of Theorem \ref{triples}? Does it hold for partitions of $\N$ into finitely many cells? How about just 3 cells? Is it true that given any partition of $\N$ into 2 cells, one of the cells is an exponential $IP$ set (of type I and/or II)? How about finitely many cells?

\end{document}